\def\subsection{\@startsection{subsection}{2}%
	\z@{.5\linespacing\@plus.7\linespacing}{.3\linespacing}%
	{\normalfont\bfseries}}
\theoremstyle{theorem}
\newtheorem{theorem}{Theorem}
\newtheorem{lemma}[theorem]{Lemma}
\theoremstyle{definition}
\newtheorem{remark}[theorem]{Remark}
\theoremstyle{remark} \theoremstyle{question} \theoremstyle{example}
\newcommand{\N}{\mathbb{N}}   
\newcommand{\R}{\mathbb{R}}   
\newcommand{\V}{\mathbb{V}}
\newcommand{\W}{\mathbb{W}}
\newcommand{\cM}{\mathcal{M}}
\newcommand{\cP}{\mathcal{P}}
\newcommand{\eps}{\varepsilon}   
\newcommand{\ov} {\overline}     
\newcommand{\wt} {\widetilde}    
\newcommand{\oA}{{\boldsymbol{A}}}
\newcommand{\oU}{{\boldsymbol{U}}}
\newcommand{\upind}{{\overline{\rm I}}}
\begin{document}
	
	
	\title[Measure-theoretic Uniformly Positive Entropy On The Space of Probability Measures]
	{Measure-theoretic Uniformly Positive Entropy\\ On The Space of Probability Measures}
	
	
	
	\author{R\^omulo M. Vermersch}
	\address{Departamento de Matem\'atica, Centro de Ci\^encias F\'isicas e
		      Matem\'aticas, Universidade Federal de Santa Catarina,
		   Florian\'opolis, SC, 88040-900, Brazil.}
	\curraddr{}
	\email{romulo.vermersch@ufsc.br}
	\thanks{}
	
	
	\subjclass[2010]{Primary 28D20; Secondary 28A33.}
	\keywords{homeomorphisms,
		probability measures, entropy, quasifactors, dynamics.}
	
	\date{}
	
	\dedicatory{}
	
	\maketitle
	\begin{abstract}
		
		For a homeomorphism $T$ on a compact metric space $X$, a $T$-invariant Borel probability measure $\mu$ on $X$ and a measure-theoretic quasifactor $\widetilde{\mu}$ of $\mu$, we study
		the relationship between the local entropy of the system $(X,\mu,T)$ and of its induced system $(\mathcal{M}(X),\widetilde{\mu},\widetilde{T})$, where $\widetilde{T}$ is the homeomorphism induced by $T$ on the space $\mathcal{M}(X)$ of all Borel probability measures
		defined on $X$.
	\end{abstract}


	\section{Introduction}\label{Intro}
	
	By a {\em topological dynamical system} (TDS) we mean a pair $(X,T)$
	consisting of a compact metric space $X$ with metric $d$ and a homeomorphism $T : X \to X$. Such a TDS induces, in a natural way, the TDS $(\cM(X),\wt{T}).$ Here, $\cM(X)$ denotes the space of all Borel probability
	measures on $X$ endowed with the {\em Prohorov metric}
	$$
	d_P(\mu,\nu):= \inf\{\delta > 0 : \mu(A) \leq \nu(A^\delta) + \delta
	\text{ for all } A \in \mathcal{X}\},
	$$
	where $\mathcal{X}$ is the $\sigma$-algebra of all Borel subsets of $X$, and
	$\wt{T} : \cM(X) \to \cM(X)$ is the homeomorphism given by
	$$
	(\wt{T}(\mu))(A):= \mu(T^{-1}(A)) \ \ \ (\mu \in \cM(X), A \in \mathcal{X}).
	$$
	It is well known that  $\cM(X)$ is a compact metric space. We refer the reader to the books
	\cite{PBil99,RDud02} for a study of the space $\cM(X)$.
	
	The investigations between the dynamics of the TDS $(X,T)$ and
	the dynamics of the induced TDS $(\cM(X),\wt{T})$ were initiated by Bauer and Sigmund \cite{WBauKSig75},
	and later were developed by several authors; see
	\cite{NBerUDarRVer22,NBerRVer16,EGlaBWei95,EGlaBWei03,QiaoZhou17,KSig78}, for instance.

	By a {\em measure-theoretic dynamical system} (MDS) we mean a triple $(X,\mu,T)$, where $(X,T)$ is a TDS and $\mu$ is a $T$-invariant Borel probability measure on $X$. Now, recall the notion of a measure-theoretic quasifactor of a MDS $\mathfrak{X}=(X,\mu,T)$ due to Glasner \cite{EGla83} (see also \cite{EGlaBWei95,EGlaBWei03}): A measure-theoretic {\it quasifactor} of $\mathfrak{X}$ is a $\wt{T}$-invariant Borel probability measure $\wt{\mu}$ on $\mathcal{M}(X)$ which satisfies the so-called {\it barycenter equation}:
	\begin{equation*}\label{barycenter} 
		\mu=\displaystyle\int_{\mathcal{M}(X)}\theta d\wt{\mu}(\theta).
	\end{equation*} Equivalently, we say that $\mu$ is the barycenter of $\tilde{\mu}$. The barycenter equation means that, by choosing any compact topology on $X$ compatible with its Borel structure, one has
	\begin{equation*}\displaystyle\int_{X}f(x)d \mu(x)=\int_{\mathcal{M}(X)}\int_{X}f(x)d \theta(x)d \wt{\mu}(\theta)
	\end{equation*}
	for every continuous function $f:X\rightarrow\mathbb{R}$. This definition is independent on the choice of the compact topology compatible with the Borel structure \cite{EGla83}. We denote by $Q(\mu)$ the set of all measure-theoretic quasifactors of $\mu$ and, sometimes, we will say that the induced MDS $\mathfrak{\widetilde{X}}=(\mathcal{M}(X),\widetilde{\mu},\widetilde{T})$ is a measure-theoretic quasifactor of $\mathfrak{X}=(X,\mu,T)$.
	
	In this work we are concerned with the relationship between the local entropy of the measure-theoretic dynamical systems $(X,\mu,T)$ and $(\mathcal{M}(X),\widetilde{\mu},\widetilde{T})$, where $\widetilde{\mu}$ is a quasifactor of $\mu$.
	
	The research on the relationship between the entropy of a MDS and of a measure-theoretic quasifactor of it can be traced back to a deep result due to Glasner and Weiss \cite{EGlaBWei95} which asserts that
	if $(X,\mu,T)$ has zero entropy, then so does $(\mathcal{M}(X),\widetilde{\mu},\widetilde{T})$ for every $\tilde{\mu}\in Q(\mu)$. By using the variational principle, it follows that if $(X,T)$ has zero topological entropy, then so does $(\mathcal{M}(X),\tilde{T})$.
	Qiao and Zhou \cite{QiaoZhou17} obtained such a result for the
	notion of sequence entropy. We point out that our second result in this work is a version of the aforementioned Glasner and Weiss result in the context of local entropy theory \cite{EGlaYe09}. In other direction, Glasner and Weiss \cite{EGlaBWei03} proved that any ergodic system of positive entropy admits \emph{every} ergodic system of positive entropy as a measure-theoretic quasifactor. So, in particular, one sees that the set of measure-theoretic quasifactors of an ergodic system of positive entropy is very large. 
	
	In the present work we study, with the powerful tools of local entropy theory \cite{EGlaYe09,KerrLi09}, the relationship between the entropy of a measure-theoretic dynamical system and the entropy of a measure-theoretic quasifactor of it. More precisely, in our first result we give a characterization of measure-theoretic uniformly positive entropy for a topological dynamical system in terms of the induced system on spaces of probability measures which are concentrated on a finite number of points. In our second result we show that, given any ergodic system, if a measure-theoretic quasifactor of it has measure-theoretic uniformly positive entropy, then so does the given ergodic system. We point out that this second result can be seen as the local version of the corresponding result due to Glasner and Weiss for quasifactors of zero-entropy systems \cite{EGlaBWei95}.
	Recently, Liu and Wei \cite{LiuWei23} studied the relationship between the local entropy of a topological factor of a dynamical system and the local entropy of the topological system induced by that factor on the space of Borel probability measures.

	We now recall the notion of {\em entropy pairs} for a TDS/MDS. By following \cite{Blan92}, we say that a pair of distinct points $x,x^{\prime}\in X$ is an {\em entropy pair} for the TDS $(X,T)$ if, for every open cover $\mathcal{U}=\{U,V\}$ of $X$ with $x\in U^{c}$ and $x^{\prime}\in V^{c}$, one has $h_{top}(T,\mathcal{U})>0$. In \cite{Blan92} Blanchard proved that $(X,T)$ has positive topological entropy if and only if there exists an entropy pair $(x,x^{\prime})$ for $(X,T)$. So, entropy pairs are a tool for localising the topological entropy in the Cartesian product $X\times X$. We denote by $E_{X}$ the set of all entropy pairs for the TDS $(X,T)$. By following \cite{B-H-M-M-R-95}, we say that a pair of distinct points $x,x^{\prime}\in X$ is a {\em $\mu$-entropy pair} for the MDS $(X,\mu,T)$ if, for any measurable partition $\mathcal{P}=\{F,F^{c}\}$ of $X$ such that $F$ contains a neighborhood of $x$ and $F^{c}$ contains a neighborhood of $x^{\prime}$, one has $h_{\mu}(T,\mathcal{P})>0$. We denote by $E_{\mu}$ the set of all $\mu$-entropy pairs for the MDS $(X,\mu,T)$. It was proved in \cite{B-H-M-M-R-95} that $E_{\mu}\subset E_{X}$ for each $T$-invariant Borel probability measure on $X$ and that the reverse inclusion holds when $\mu$ is uniquely ergodic. On the other hand, in \cite{B-G-H-97} it was proved that if $(X,T)$ has positive topological entropy, then there exists a $T$-invariant Borel probability measure $\mu$ on $X$ such that $E_{\mu}=E_{X}$. 
	
	Now we recall the notion of {\it uniformly positive entropy} (UPE). We say that a TDS $(X,T)$ has UPE if every pair of distinct points $x,x^{\prime}\in X$ is in $E_{X}$. This notion was introduced by Blanchard \cite{Blan92} as a candidate for
	an analogue in topological dynamics for the notion of a $K$-process in
	ergodic theory. As it was shown by Blanchard in \cite{Blan92} and \cite{Blan93}, the notion of UPE is more adequate than the notion of {\em topological completely positive entropy} (topological CPE - that is, every topological factor of the TDS has postive entropy) in that respect. Indeed,  in \cite{Blan92} he proved that every non-trivial
	factor of an UPE system has positive topological entropy and in \cite{Blan93}
	he proved that an UPE system is disjoint from every minimal zero entropy
	system, while also in \cite{Blan92} he showed that topological CPE does not implies any degree of topological mixing, not even transitivity. Although an UPE system is topologically weakly mixing but not always
	strongly mixing \cite{Blan93}, Glasner and Weiss \cite{EGlaBWei94} proved
	that UPE is a necessary condition for a TDS $(X,T)$ to have a $T$-invariant
	probability measure $\mu$ of full support whose corresponding measurable
	dynamical system $(X,\mu,T)$ is a $K$-process. Moreover, Huang and Ye \cite{HuangYe} generalized this by proving that if a topological dynamical system admits an invariant $K$-measure with full support, then it has UPE of all orders; so, UPE of all orders is the topological analogue of the $K$-property from Ergodic Theory.
	
	We now introduce the notion of a $\mu$-UPE system. Let $\mathfrak{X}=(X,\mu,T)$ be a MDS. We say that a two-set Borel partition $\mathcal{P}=\{P_{0},P_{1}\}$ of $X$ is a \emph{replete partition} if $\text{int}P_{0}\neq\varnothing$ and $\text{int}P_{1}\neq\varnothing$ \cite{B-H-M-M-R-95}. We say that $\mathfrak{X}$ has $\mu$-UPE if any pair of distinct points $x,x^{\prime}\in X$ is in $E_{\mu}$. Equivalently,  $\mathfrak{X}$ has $\mu$-UPE if $h_{\mu}(T,\mathcal{P})>0$ for every replete partition $\mathcal{P}$. 
	
	In the proofs of our results
	we will use local entropy theory as in the unifying work due to
	Kerr and Li \cite{KerrLi09}.
	
	Finally, we remark that the present work represents the measure-theoretic counterpart of \cite{NBerUDarRVer22} and complements the works \cite{EGlaBWei95} and \cite{EGlaBWei03} in the context of local entropy.


	\section{Preliminaries}
	
	Recall that the Prohorov metric $d_P$ on $\cM(X)$ induces the so-called
	{\em weak*-topology}, that is, the topology whose basic open neighborhoods
	of $\mu \in \cM(X)$ are the sets of the form
	$$
	\V(\mu;f_1,\ldots,f_k;\eps):= \Big\{\nu \in \cM(X) :
	\Big| \int_X f_i \,d\nu - \int_X f_i \,d\mu \Big| < \eps
	\text{ for } i = 1,\ldots,k\Big\},
	$$
	where $k \geq 1$, $f_1,\ldots,f_k : X \to \R$ are continuous functions and
	$\eps > 0$.
	
	In \cite{NBerUDarRVer22} Bernardes, Darji and the author proved the following result that gives another basis for the  weak*-topology on $\mathcal{M}(X)$:

	\begin{lemma}\label{newbasis}
		The sets of the form
		\begin{equation*}\label{Equa1}
			\W(U_1,\ldots,U_k;\eta_1,\ldots,\eta_k):=
			\{\nu \in \cM(X) : \nu(U_i) > \eta_i \text{ for } i = 1,\ldots,k\},
		\end{equation*}
		where $k \geq 1$, $U_1,\ldots,U_k$ are nonempty disjoint open sets in $X$
		and $\eta_1,\ldots,\eta_k$ are positive real numbers with
		$\eta_1+\cdots+\eta_k < 1$, form a basis for the weak$\,^*$-topology on
		$\cM(X)$.
	\end{lemma}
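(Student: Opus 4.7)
The plan is to verify the two defining properties of a basis: (i) each $\W(U_1,\ldots,U_k;\eta_1,\ldots,\eta_k)$ is weak$^*$-open, and (ii) every standard basic neighborhood $\V(\mu;f_1,\ldots,f_k;\eps)$ contains a set of the form $\W(\cdots)$ which still contains $\mu$.

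Direction (i) is immediate from the portmanteau theorem: for any open $U \subseteq X$, the map $\nu \mapsto \nu(U)$ is lower semicontinuous on $\cM(X)$, so each $\{\nu : \nu(U_i) > \eta_i\}$ is weak$^*$-open, and hence so is their finite intersection.

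For (ii), I would first tame the test functions by uniform continuity: set $M := \max_i \|f_i\|_\infty + 1$ and choose $\delta > 0$ so that $d(x,y) < 2\delta$ forces $|f_i(x) - f_i(y)| < \eps/3$ for every $i$. Then I would build a partition of $X$ into small-diameter disjoint open sets of nearly full $\mu$-measure: cover $X$ by finitely many balls $B(x_1, \delta/2),\ldots,B(x_n, \delta/2)$ by compactness, and for each $j$, use the fact that $\{r > 0 : \mu(\partial B(x_j, r)) > 0\}$ is at most countable to pick $r_j \in (\delta/2, \delta)$ with $\mu(\partial B(x_j, r_j)) = 0$. Setting $V_j := B(x_j, r_j)$ and disjointifying via $U_1 := V_1$ and $U_j := V_j \setminus \ov{V_1 \cup \cdots \cup V_{j-1}}$ for $j \geq 2$ yields pairwise disjoint open sets of diameter less than $2\delta$ with $X \setminus \bigcup_j U_j \subseteq \bigcup_j \partial V_j$, a $\mu$-null set. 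After discarding empty $U_j$'s and those with $\mu(U_j) = 0$, I keep nonempty disjoint open $U_1,\ldots,U_m$ satisfying $\sum_j \mu(U_j) = 1$.

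To finish, pick $y_j \in U_j$, set $\eta_j := \mu(U_j) - \epsilon'$ for sufficiently small $\epsilon' \in (0, \min_j \mu(U_j))$ to be chosen, and observe that $\sum_j \eta_j = 1 - m\epsilon' < 1$ while $\mu \in \W(U_1,\ldots,U_m;\eta_1,\ldots,\eta_m)$. For any $\nu$ in this $\W$-set, the lower bounds $\nu(U_j) > \mu(U_j) - \epsilon'$ combined with $\sum_j \nu(U_j) \leq 1 = \sum_j \mu(U_j)$ automatically upgrade to $|\nu(U_j) - \mu(U_j)| < m\epsilon'$ for every $j$, together with $\nu(X \setminus \bigcup_j U_j) < m\epsilon'$. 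A standard three-$\eps$ comparison through the simple function $g_i := \sum_j f_i(y_j)\mathbf{1}_{U_j}$, which is uniformly within $\eps/3$ of $f_i$ on $\bigcup_j U_j$, then gives
\[
\Big|\int f_i\, d\mu - \int f_i\, d\nu\Big| \leq \frac{2\eps}{3} + 2Mm^2 \epsilon',
\]
so choosing $\epsilon' < \eps/(6Mm^2)$ places $\nu$ in $\V(\mu;f_1,\ldots,f_k;\eps)$. The main technical obstacle is precisely the partitioning step: one needs disjoint \emph{open} pieces (not merely Borel ones) whose union has full $\mu$-measure, which is secured by the classical trick of avoiding the countably many radii on which $\mu$ concentrates mass on the bounding sphere.
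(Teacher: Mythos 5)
Your argument is correct and complete. Note that the paper itself does not prove Lemma~\ref{newbasis}: it is quoted without proof from the earlier work of Bernardes, Darji and Vermersch (\emph{Ergodic Theory Dynam.\ Systems} \textbf{42} (2022)), so there is no in-paper proof to compare against. On its own merits, your proof is the standard one and all the steps check out: openness of the $\W$-sets via lower semicontinuity of $\nu\mapsto\nu(U)$ for open $U$ (portmanteau); the construction of a finite disjoint family of open sets of diameter $<2\delta$ whose union is $\mu$-conull, using the classical fact that only countably many radii can give a sphere of positive $\mu$-measure; the self-improvement of the one-sided bounds $\nu(U_j)>\mu(U_j)-\epsilon'$ to two-sided bounds $|\nu(U_j)-\mu(U_j)|<m\epsilon'$ via $\sum_j\nu(U_j)\le 1=\sum_j\mu(U_j)$ (this is the one genuinely clever point, and you state it correctly); and the final three-term estimate through the simple functions $g_i$. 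The bookkeeping $\sum_j\eta_j=1-m\epsilon'<1$ and $\eta_j>0$ confirms that the resulting set is a legitimate member of the proposed basis, and the degenerate case $m=0$ cannot occur since the kept pieces carry total $\mu$-mass $1$. I have no corrections.
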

	
	Let us now recall some definitions and notations from entropy theory.
	In what follows, all logarithms are in base $2$. 
	
	Let $(X,\mu,T)$ be a MDS.
	Given finite measurable partitions $\cP_1,\ldots,\cP_n$ of $X$, let
	$$
	\cP_1 \lor \cdots \lor \cP_n := \{P_1 \cap \cdots \cap P_n:
	P_1 \in \cP_1,\ldots,P_n \in \cP_n\}.
	$$
	The {\em entropy} of a finite partition $\cP$ of $X$ is defined by
	$$
	H_{\mu}(\cP):= -\sum_{P\in\cP}\mu(P)\log \mu(P).
	$$ The {\em entropy of $T$ with respect to $\cP$} is defined by
	$$
	h_{\mu}(T,\cP):= \lim_{n \to \infty} \frac{1}{n} H_{\mu}(\cP^{n-1}),
	$$
	where $\cP^{n-1}:= \cP \lor T^{-1}\cP \lor\dots\lor T^{-(n-1)}\cP$,
	and the {\em entropy} of $T$ is given by
	$$
	h_{\mu}(T):= \sup_{\cP} h_{\mu}(T,\cP),
	$$
	where the supremum is taken over all finite measurable partitions $\mathcal{P}$ of $X$.
	
	The notion of  entropy was introduced in Ergodic Theory by Kolmogorov \cite{Kolmogorov}. It plays a major role in dynamics, notably in the Isomorphism Theory \cite{Ornstein75}. 

	We now introduce some elements of Kerr-Li machinery \cite{KerrLi09} that we shall use in the sequel. Let $\mathfrak{X}=(X,\mu,T)$ be a MDS and let $\oA = (A_1 ,\dots ,A_k )$ be a
	tuple of subsets of $X$. For a subset $D$ of $X$, we say that $J\subset\mathbb{N}$ is an
	{\emph independence set for $\oA$ relative to $D$} if for every nonempty
	finite subset $I\subset J$ and every map $\sigma : I\to \{ 1,\dots ,k \}$, we have
	$$D\cap \bigcap_{j\in I} T^{-j} A_{\sigma (j)} \neq\emptyset.$$
For each $\delta > 0$, we denote by $\mathcal{B} (\mu , \delta )$ the collection of all
	Borel subsets $D$ of $X$ such that $\mu (D) \geq 1 - \delta$. For each $m\geqslant 1$ and $\delta>0$, we define
	\begin{align*}
		\varphi({\oA ,\delta},m) &= \min_{D\in \mathcal{B} (\mu , \delta )}
		\max \big\{ |\{1,\dots,m\}\cap J| : J\text{ is an independence set for }\oA
		\text{ relative to } D \big\}.
	\end{align*}
Now, put $$\upind_\mu (\oA , \delta ):=\displaystyle \limsup_{m\to\infty} \frac{\varphi(\oA ,\delta,m)}{m}.$$ Finally, let us define the {\em upper $\mu$-independence density} of $\oA$ as
	$$\upind_\mu (\oA ) := \sup_{\delta > 0} \upind_\mu (\oA , \delta ).$$
	
	The following useful characterization of $\mu$-UPE  is due to Kerr and Li
	\cite{KerrLi09}.
	
	\begin{theorem}\label{KerrLi} Let $(X,\mu,T)$ be a MDS. Then, $(X,\mu,T)$ has $\mu$-UPE if and only if
		for every pair $\oU=(U_{0},U_{1})$ of nonempty disjoint open sets in $X$, one has $\upind_\mu (\oU )>0$.	\end{theorem}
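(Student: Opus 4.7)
My plan is to reduce this global criterion for $\mu$-UPE to the pointwise characterization of $\mu$-entropy pairs in terms of independence density due to Kerr and Li \cite{KerrLi09}, namely that for distinct points $x_0,x_1\in X$, the pair $(x_0,x_1)$ lies in $E_\mu$ if and only if $\upind_\mu(\oA)>0$ for every pair $\oA=(A_0,A_1)$ of disjoint open neighborhoods of $x_0$ and $x_1$. Granted this pointwise statement, both implications in the theorem become essentially formal.

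For the forward direction, assume $(X,\mu,T)$ has $\mu$-UPE. Let $\oU=(U_0,U_1)$ be any pair of nonempty disjoint open sets in $X$, and pick points $x_i\in U_i$ for $i=0,1$; since $U_0\cap U_1=\emptyset$ we automatically have $x_0\neq x_1$. By $\mu$-UPE we have $(x_0,x_1)\in E_\mu$, and since $(U_0,U_1)$ is a valid pair of disjoint open neighborhoods of $(x_0,x_1)$, the pointwise characterization yields $\upind_\mu(\oU)>0$.

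For the converse, assume $\upind_\mu(\oU)>0$ for every pair $\oU=(U_0,U_1)$ of nonempty disjoint open sets in $X$. Fix distinct $x_0,x_1\in X$ and let $\oA=(A_0,A_1)$ be any pair of disjoint open neighborhoods of $x_0$ and $x_1$. Since $\oA$ is itself a pair of nonempty disjoint open sets, the hypothesis gives $\upind_\mu(\oA)>0$. The pointwise characterization then forces $(x_0,x_1)\in E_\mu$, and since the pair of distinct points was arbitrary we conclude that $(X,\mu,T)$ has $\mu$-UPE.

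The hard part, which cannot be bypassed, is establishing the pointwise characterization, and it constitutes Kerr and Li's main technical input. The easier half (positive independence density implies entropy pair) comes from a counting argument: a positive-density independence set for $(A_0,A_1)$ produces exponentially many distinguishable $n$-step names for any Borel partition $\{F,F^{c}\}$ with $A_1\subset F$ and $A_0\subset F^{c}$, forcing that replete partition to have positive entropy. The harder half (entropy pair implies positive independence density) requires extracting a large independence set for $(A_0,A_1)$ from a partition of positive entropy; in \cite{KerrLi09} this is achieved via a Sauer-Shelah-Perles type combinatorial lemma together with a measure-theoretic pigeonhole argument ensuring the independence set sits inside a set of $\mu$-measure at least $1-\delta$, so that the extracted set contributes to $\varphi(\oA,\delta,m)$ rather than being lost to a null remainder.
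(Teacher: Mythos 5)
The paper does not actually prove this statement---it quotes it directly from Kerr and Li \cite{KerrLi09}---and your proposal likewise defers all of the real content to Kerr and Li's pointwise characterization of $\mu$-entropy pairs as $\mu$-IE pairs, so the two approaches coincide in substance. Your formal reduction between the pointwise and global statements is correct (the restriction to \emph{disjoint} neighborhoods is harmless because $\upind_\mu$ is monotone under enlarging the sets $A_0,A_1$), and your sketch of the two halves of the Kerr--Li theorem accurately locates where the genuine difficulty lies.
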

	

	\section{Our results}
	
	For each $n \in \N$, let
	$$
	\cM_n(X):= \Big\{\displaystyle\frac{1}{n}\sum_{i=1}^n \delta_{x_i}\in\cM(X) :
	x_1,\ldots,x_n \in X \ \text{not necessarily distinct}\Big\},
	$$
	where $\delta_x$ denotes the unit mass concentrated at the point $x$ of $X$.
	It is classical that $\bigcup_{n \in \N} \cM_n(X)$ is dense in $\cM(X)$.
	Since $\cM_n(X)$ is $\wt{T}$-invariant, we can consider the TDS
	$(\cM_n(X),\wt{T})$, where we are also denoting by $\wt{T}$ the
	corresponding restricted map. For each $n\in\mathbb{N}$, let us consider $(X^{(n)},\mu^{(n)})$ the canonical symmetric $n$-fold joining  of $(X,\mu)$ \cite{EGlabook03}, where $\mu^{(n)}:=\mu\times\dots\times\mu$ is the product measure on $X^{(n)}:=X\times\dots\times X$. We also consider $T_n:=T\times\dots\times T$ and, given any $\wt{\mu}\in Q(\mu)$ such that $\widetilde{\mu}(\cM_n(X))>0$ we consider the MDS $(\cM_n(X),\wt{\mu},\wt{T})$, where we are also denoting by $\wt{\mu}$ the corresponding normalized induced measure. Denote by $S_{n}$ the group of all permutations of $n$ elements and let us consider $\tau:X^{(n)}\rightarrow \hat{X}^{(n)}:=X^{(n)}/S_{n}$ the quotient map. A typical element of $\hat{X}^{(n)}$ will be denoted by $\langle x_{1},\dots,x_{n}\rangle$. Moreover, we can consider the quotient measure $\tau_{*}(\mu^{(n)}):=\mu^{(n)}\circ\tau^{-1}$ on $\hat{X}^{(n)}$. Now let us consider the maps $$\psi: ( x_{1},\dots,x_{n})\in X^{(n)} \mapsto (1/n)\displaystyle\sum_{l=1}^{n}\delta_{x_{l}}\in \cM_{n}(X)$$ and $$\hat{\psi}:\langle x_{1},\dots,x_{n}\rangle\in\hat{X}^{(n)}\mapsto(1/n)\displaystyle\sum_{l=1}^{n}\delta_{x_{l}}\in \cM_n(X).$$ Clearly, $\hat{\psi}$ is a Borel isomorphism and we can consider the measure $(\hat{\psi}\circ\tau)_{*}(\mu^{(n)})$ on $\cM_n(X)$. It follows from Lemma 3.5 in \cite{EGlaBWei03} that $(\cM(X),(\hat{\psi}\circ\tau)_{*}(\mu^{(n)}),\tilde{T})$ is a quasifactor of $(X,\mu,T)$ which is isomorphic to $(\hat{X}^{(n)},\hat{\mu}^{(n)},\hat{T}_{n})$, where we are denoting by $\hat{T}_{n}$ the map defined by $\hat{T}_{n}\circ \tau=\tau\circ T_{n}$ and $\hat{\mu}^{(n)}:=\tau_{*}(\mu^{(n)})$. Moreover, it is easy to verify that if $(X^{(n)},\mu^{(n)},T_{n})$ has $\mu^{(n)}$-UPE, then $(\hat{X}^{(n)},\hat{\mu}^{(n)},\hat{T}_{n})$ has $\hat{\mu}^{(n)}$-UPE.
	
	We are ready to state and prove our first result:

	\begin{theorem}\label{First result}
		For every ergodic MDS $(X,\mu,T)$, the following assertions are equivalent:
		\begin{itemize}
			\item [\rm (i)]   $(X,\mu,T)$ has $\mu$-UPE;
			\item [\rm (ii)]  $(\cM_n(X),(\hat{\psi}\circ\tau)_{*}(\mu^{(n)}),\wt{T})$ has $(\hat{\psi}\circ\tau)_{*}(\mu^{(n)})$-UPE for some $1 \leq n <\infty$;
			\item [\rm (iii)] $(\cM_n(X),(\hat{\psi}\circ\tau)_{*}(\mu^{(n)}),\wt{T})$ has $(\hat{\psi}\circ\tau)_{*}(\mu^{(n)})$-UPE for every $1 \leq n < \infty$.
		\end{itemize}                 
	\end{theorem}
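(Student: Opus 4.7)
My plan is to establish the cycle (iii) $\Rightarrow$ (ii) $\Rightarrow$ (i) $\Rightarrow$ (iii). The first implication is trivial, so the content lies in the other two; in both I rely crucially on Kerr--Li's characterization, Theorem~\ref{KerrLi}.

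For (ii) $\Rightarrow$ (i), given a pair $\oU = (U_0, U_1)$ of nonempty disjoint open sets in $X$, I would consider its lift to the pair $\wt\oU = (\wt U_0, \wt U_1)$ in $\cM_n(X)$, where $\wt U_i := \W(U_i; 1 - 1/n) \cap \cM_n(X)$. The arithmetic structure of $\cM_n(X)$ forces $\nu(U_i) = 1$ for every $\nu \in \wt U_i$, so $\wt U_0$ and $\wt U_1$ are disjoint, and each Dirac measure $\delta_x$ with $x \in U_i$ witnesses non-emptiness. The hypothesis combined with Theorem~\ref{KerrLi} gives $\upind_{\wt\mu}(\wt\oU) > 0$, where $\wt\mu := (\hat\psi \circ \tau)_*(\mu^{(n)})$. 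Transporting via the Borel isomorphism $\hat\psi : \hat X^{(n)} \to \cM_n(X)$ and lifting through $\tau$ (the symmetric boxes $U_i \times \cdots \times U_i$ are $S_n$-invariant, so pre-images and lifts match cleanly), I would deduce positive $\mu^{(n)}$-independence density of the pair $(U_0 \times \cdots \times U_0, \, U_1 \times \cdots \times U_1)$ in $X^{(n)}$. A final coordinate projection closes the argument: if $J$ is an independence set for this pair relative to $D \subset X^{(n)}$ with $\mu^{(n)}(D) \geq 1 - \delta$, then $J$ is an independence set for $\oU$ relative to $\pi_1(D) \subset X$, because every witness point in $X^{(n)}$ has all coordinates in the prescribed $U_i$, and $\mu(\pi_1(D)) \geq \mu^{(n)}(D) \geq 1 - \delta$. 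Theorem~\ref{KerrLi} then yields $\mu$-UPE.

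For (i) $\Rightarrow$ (iii), by the remark in the excerpt and the isomorphism between the induced system on $\cM_n(X)$ and $(\hat X^{(n)}, \hat\mu^{(n)}, \hat T_n)$, it suffices to show that $(X^{(n)}, \mu^{(n)}, T_n)$ has $\mu^{(n)}$-UPE. Given any pair of nonempty disjoint open sets $V_0, V_1 \subset X^{(n)}$, I would shrink them to basic open boxes $V_i \supset A_1^{(i)} \times \cdots \times A_n^{(i)}$ in which some coordinate $i_0$ satisfies $A_{i_0}^{(0)} \cap A_{i_0}^{(1)} = \emptyset$. Invoking Kerr--Li's IE-tuple machinery together with ergodicity of $(X,\mu,T)$, I would upgrade $\mu$-UPE to the statement that every finite tuple of nonempty open subsets of $X$ is a $\mu$-IE-tuple; in particular the $2n$-tuple $(A_1^{(0)}, A_1^{(1)}, \ldots, A_n^{(0)}, A_n^{(1)})$ has positive upper $\mu$-independence density. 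A Fubini-type combinatorial argument — using $D^n \subset X^{(n)}$ as the witnessing set whenever $D \subset X$ witnesses IE in $X$, so that coordinatewise independence of the $2n$-tuple in $X$ produces a common witness in $X^{(n)}$ — then transfers this to positive $\mu^{(n)}$-independence density of $(V_0, V_1)$, and Theorem~\ref{KerrLi} finishes this direction.

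The main obstacle is the upgrade from $\mu$-UPE to $\mu$-UPE of all orders in the ergodic setting. This is a nontrivial structural statement about $\mu$-IE-tuples requiring the closure, permutation-invariance and extension properties developed by Kerr and Li, combined essentially with ergodicity (which moreover guarantees $\supp \mu = X$, so every nonempty open set has positive $\mu$-measure). Once the upgrade is in hand, the Fubini-style lifting and coordinate projections that underpin both non-trivial directions are essentially bookkeeping.
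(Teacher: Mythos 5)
Your treatment of (ii) $\Rightarrow$ (i) is essentially the paper's argument: the sets $\wt{U}^{(n)}_i=\{\nu\in\cM_n(X):\nu(U_i)>(n-1)/n\}$ force \emph{all} $n$ atoms of an empirical measure into $U_i$, and independence sets are transported back to $X$. One quantifier needs care, though: to conclude $\upind_\mu(\oU)>0$ you must produce independence sets relative to an \emph{arbitrary} Borel $D\subset X$ with $\mu(D)\geq 1-\alpha$; the correct move is to lift such a $D$ to $D\times\cdots\times D$ and choose $\alpha$ with $(1-\alpha)^n\geq 1-\delta$ (the paper takes $\alpha<1-\sqrt[n]{1-\delta}$), whereas your formulation (project $\pi_1(D)$ for $D\subset X^{(n)}$ of large $\mu^{(n)}$-measure) only gives independence relative to sets of that special form. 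This is repairable bookkeeping.

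The genuine gap is in (i) $\Rightarrow$ (iii). The paper does not attempt to prove the product statement combinatorially: it \emph{cites} the theorem that an $n$-fold product of $\mu$-UPE systems has $\mu^{(n)}$-UPE (Huang--Ye; see also Kerr--Li), which at the level of pairs is the product formula for $\mu$-IE-pairs and requires nothing about higher-order tuples. Your route instead hinges on ``upgrading'' $\mu$-UPE to $\mu$-UPE of all orders. That upgrade is a strictly stronger property and does not follow from order-$2$ UPE: in the Pinsker-factor picture, $\mu$-UPE says the $2$-fold relatively independent self-joining over the Pinsker factor has full support, and this does not force the $3$-fold joining to have full support (the fibre supports can pairwise cover $X\times X$ without their cubes covering $X^3$); ergodicity does not repair this, and the analogous topological implication (UPE $\Rightarrow$ UPE of order $3$) is known to fail. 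Moreover, even granting the upgrade, the ``Fubini-type'' transfer is not mere bookkeeping: positive $\mu^{(n)}$-independence density of $(V_0,V_1)$ demands independence sets relative to \emph{every} Borel $D\subset X^{(n)}$ with $\mu^{(n)}(D)\geq 1-\delta$, and such a $D$ need not contain, nor be comparable to, a product set $D_0^{\,n}$; coordinatewise witnesses obtained in $X$ therefore cannot simply be assembled into a witness lying in $D$. Both difficulties disappear if you replace this step by the cited product theorem, as the paper does.
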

	
	\begin{proof}
		
		\smallskip
		\noindent
		(i) $\Rightarrow$ (iii): Suppose that $(X,\mu,T)$ has $\mu$-UPE.
		Fix any $1 \leq n< \infty$ and let $\wt{\oU}^{(n)}:=(\wt{U}^{(n)}_0,\wt{U}^{(n)}_1)$ be any pair of nonempty disjoint open
		sets in $\cM_{n}(X)$. By Lemma~\ref{newbasis}, there exist nonempty open sets
		$U_{0,1},\ldots,U_{0,n},U_{1,1},\ldots,U_{1,n}$ in $X$ such that
		\begin{equation}\label{E1}
			\psi(U_{0,1} \times\cdots\times U_{0,n}) \subset \wt{U}^{(n)}_0
			\ \text{ and } \
			\psi(U_{1,1} \times\cdots\times U_{1,n}) \subset \wt{U}^{(n)}_1
		\end{equation} Put
		\begin{equation*}
			U^{(n)}_0:=U_{0,1} \times\cdots\times U_{0,n}\ \text{and}\	
			U^{(n)}_1:=U_{1,1} \times\cdots\times U_{1,n}\ ,
		\end{equation*}
		which are nonempty open sets in $X^{(n)}$.
		Thus, 
		\begin{equation*}
			\hat{U}^{(n)}_0:=\tau(U^{(n)}_0)\ \text{and}\ \hat{U}^{(n)}_1:=\tau(U^{(n)}_1)
		\end{equation*}
		are nonempty open sets in $\hat{X}^{(n)}$ such that
		\begin{equation}\label{E1.1}
			\hat{\psi}(\hat{U}_0^{(n)})\subset \wt{U}^{(n)}_0\ \text{and}\  \hat{\psi}(\hat{U}_1^{(n)})\subset \wt{U}^{(n)}_1
		\end{equation} Note that $\wt{T} \circ \hat{\psi} = \hat{\psi} \circ \hat{T}_n$.
		Since $\wt{U}^{(n)}_0 \cap \wt{U}^{(n)}_1 = \varnothing$ (\ref{E1.1}) implies that
		\begin{equation}\label{E2}
			\hat{U}^{(n)}_0\cap \hat{U}^{(n)}_1
			= \varnothing.
		\end{equation}
		Now, since any $n$-product of $\mu$-UPE systems is a $\mu^{(n)}$-UPE system \cite{HuangYe} (see also \cite{KerrLi09}), by (\ref{E2}) and Theorem~\ref{KerrLi},
		there exist $d>0$, $\delta>0$ and $m_{k}\to\infty$ such that, for every $k\geqslant 1$ and every $\hat{D}^{(n)}\subset \hat{X}^{(n)}$ with $\hat{\mu}^{(n)}(\hat{D}^{(n)})\geqslant 1-\delta$ there exists $J\subset\mathbb{N}$ with $|J\cap\{1,\dots,m_{k}\}|\geqslant d.m_{k}$ such that $$\hat{D}^{(n)}\cap\bigcap_{j \in I} \hat{T}_{n}^{-j}(\hat{U}^{(n)}_{\sigma(j)}) \neq \varnothing,$$ for every nonempty finite subset 
		$I \subset J$ and for every map $\sigma : I \to \{0,1\}$. Suppose that $k\geqslant 1$ and $\wt{D}^{(n)}\subset \cM_{n}(X)$ is any Borel set such that $(\hat{\psi}\circ\tau)_{*}(\mu^{(n)})(\wt{D}^{(n)})\geqslant 1-\delta$. So, $\hat{D}^{(n)}:=\hat{\psi}^{-1}(\wt{D}^{(n)})$ is such that $\hat{\psi}(\hat{D}^{(n)})=\wt{D}^{(n)}$ and $\hat{\mu}^{(n)}(\hat{D}^{(n)})\geqslant 1-\delta$. Let $J\subset\mathbb{N}$ be as above and pick any nonempty finite set $I\subset J$. Thus, for any map $\sigma:I\to\{0,1\}$, we have
		\begin{align*}
			\varnothing\neq  \hat{\psi}\Big(\hat{D}^{(n)}\cap\bigcap_{j \in I} \hat{T}_n^{-j}(\hat{U}^{(n)}_{\sigma(j)})\Big)\
			&\subset \hat{\psi}(\hat{D}^{(n)})\cap\bigcap_{j \in I} (\hat{\psi}\circ\hat{T}^{-j}_n)\big( \hat{U}^{(n)}_{\sigma(j)}\big)\\
			&\subset\hat{\psi}(\hat{D}^{(n)})\cap\bigcap_{j \in I} \wt{T}^{-j}\big(\hat{\psi}\big( \hat{U}^{(n)}_{\sigma(j)}\big)\big)\\
			&\subset \wt{D}^{(n)}\cap\bigcap_{j \in I} \wt{T}^{-j}(\wt{U}_{\sigma(j)}),		
		\end{align*}
		where we have used (\ref{E1.1}) in the second inclusion. We conclude that $\upind_{(\hat{\psi}\circ\tau)_{*}(\mu^{(n)})} (\wt{\oU}^{(n)})\geqslant d$. Hence, by Theorem~\ref{KerrLi}, we see that $(\cM_n(X),(\hat{\psi}\circ\tau)_{*}(\mu^{(n)}),\wt{T})$ has $(\hat{\psi}\circ\tau)_{*}(\mu^{(n)})$-UPE.
		
		\smallskip
		\noindent
		(ii) $\Rightarrow$ (i): Suppose that $(\cM_n(X),(\hat{\psi}\circ\tau)_{*}(\mu^{(n)}),\wt{T})$ has $(\psi\circ\tau)_{*}(\mu^{(n)})$-UPE for some $2 \leq n < \infty$.
		Let $\oU:=(U_0,U_1)$ be any pair of nonempty disjoint open sets in $X$ and consider
		$$
		\wt{U}^{(n)}_0:= \Big\{\mu \in \cM_n(X) : \mu(U_0) > \frac{n-1}{n}\Big\}
		\ \text{ and } \
		\wt{U}^{(n)}_1:= \Big\{\mu \in \cM_n(X) : \mu(U_1) > \frac{n-1}{n}\Big\},
		$$
		which are nonempty disjoint open sets in $\cM_n(X)$.
		Since $(\cM_n(X),(\hat{\psi}\circ\tau)_{*}(\mu^{(n)}),\wt{T})$ has $(\hat{\psi}\circ\tau)_{*}(\mu^{(n)})$-UPE, the pair $\wt{\oU}:=(\wt{U}^{(n)}_0,\wt{U}^{(n)}_1)$ has positive upper $(\hat{\psi}\circ\tau)_{*}(\mu^{(n)})$-density, that is, there are $d>0$, $\delta>0$ and $m_{k}\to\infty$ such that, for every $k\geqslant 1$ and every $\wt{D}^{(n)}\subset \cM_{n}(X)$ with $(\hat{\psi}\circ\tau)_{*}(\mu^{(n)})(\wt{D}^{(n)})\geqslant 1-\delta$, there exists $J\subset\mathbb{N}$ with $|J\cap\{1,\dots,m_{k}\}|\geqslant d.m_{k}$ such that $$\wt{D}^{(n)}\cap\bigcap_{j \in I} \wt{T}^{-j}(\wt{U}^{(n)}_{\sigma(j)}) \neq \varnothing,$$ for every nonempty finite subset 
		$I \subset J$ and for every map $\sigma : I \to \{0,1\}$. Fix any $0<\alpha<1-\sqrt[n]{1-\delta}$ and pick $k\geqslant 1$ and $D\subset X$ with $\mu(D)\geqslant 1-\alpha$.
		If $D^{(n)}:=D\times\dots\times D$, then $\mu^{(n)}(D^{(n)})\geqslant 1-\delta$ and notice that the set $\wt{D}_{n}:=(\hat{\psi}\circ\tau)(D^{(n)})\subset \cM_{n}(X)$ is so that $(\hat{\psi}\circ\tau)_{*}(\mu^{(n)})(\wt{D}_{n})=\mu^{(n)}(D^{(n)})\geqslant 1-\delta$. So, there exists some $J\subset\mathbb{N}$ with $|J\cap\{1,\dots,m_{k}\}|\geqslant d.m_{k}$ such that $$\wt{D}_{n}\cap\bigcap_{j \in I} \wt{T}^{-j}(\wt{U}^{(n)}_{\sigma(j)}) \neq \varnothing,$$ for every nonempty finite subset 
		$I \subset J$ and for every map $\sigma : I \to \{0,1\}$. Fix any nonempty finite subset 
		$I \subset J$ and any map $\sigma : I \to \{0,1\}$, and pick any $\nu\in\wt{D}_{n}\cap\displaystyle\bigcap_{j \in I} \wt{T}^{-j}(\wt{U}^{(n)}_{\sigma(j)}) \neq \varnothing$. So, there are $x_{1},\dots,x_{n}\in D$ such that $$\nu=\displaystyle\frac{1}{n}\sum_{l=1}^{n}\delta_{x_{l}}$$ and $$\left(\displaystyle\frac{1}{n}\sum_{l=1}^{n}\delta_{T^{j}x_{l}}\right)(U_{\sigma(j)})>\displaystyle\frac{n-1}{n}$$ for every $j\in I$. Therefore, $\{x_{1},\dots,x_{n}\}\subset D\cap\displaystyle\bigcap_{j\in I}T^{-j}(U_{\sigma(j)})$. Since $\mu(D)\geqslant 1-\alpha$ we obtain $\upind_\mu (\oU )\geqslant d$ and so, by Theorem~\ref{KerrLi}, we conclude that $(X,\mu,T)$ has $\mu$-UPE.

	\end{proof}
	In order to prove our next result we will need to recall some notation and also two important results.
	
	For each $n\geqslant 1$, $\delta>0$ and $\mathcal{P}$ a finite measurable partition, let us denote by $N(T,\cP,n,\delta)$ the minimal cardinality of a Borel subcollection of $\cP \lor T^{-1}\cP \lor\dots\lor T^{-(n-1)}\cP$ needed to cover a set $D\subset X$ with $\mu(D)\geqslant 1-\delta$.  
	The following result will be essential to our argument (see \cite{Katok80,Rudolph}):
	
	\begin{theorem}\label{Katok}
		If $(X,\mu,T)$ is ergodic, then $h_{\mu}(T,\cP)=\displaystyle\lim_{n\to\infty}(1/n)\log(N(T,\cP,n,\delta))$, for each fixed $0<\delta<1$.
	\end{theorem}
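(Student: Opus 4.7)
The plan is to derive the statement from the Shannon--McMillan--Breiman (SMB) theorem. Under ergodicity, SMB asserts that for $\mu$-a.e. $x\in X$,
$$
-\frac{1}{n}\log \mu(\cP^{n-1}(x))\longrightarrow h_{\mu}(T,\cP),
$$
where $\cP^{n-1}(x)$ denotes the atom of $\cP^{n-1}$ containing $x$. Ergodicity enters precisely here: it forces the SMB limit to be the constant $h_{\mu}(T,\cP)$ rather than a nonconstant $T$-invariant function. Both inequalities will then follow from cardinality estimates on ``good'' atoms, using throughout that $x\mapsto\mu(\cP^{n-1}(x))$ is constant on atoms, so any superlevel or sublevel set of this function is automatically a union of atoms of $\cP^{n-1}$.

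For the upper bound, fix $\eps>0$ and set
$$
B_n:=\{x\in X:\mu(\cP^{n-1}(x))>2^{-n(h_{\mu}(T,\cP)+\eps)}\}.
$$
By SMB, $\mu(B_n)\to 1$, so $\mu(B_n)\geqslant 1-\delta$ for all large $n$. Since $B_n$ is a union of atoms of $\cP^{n-1}$, each of mass greater than $2^{-n(h_{\mu}(T,\cP)+\eps)}$, fewer than $2^{n(h_{\mu}(T,\cP)+\eps)}$ such atoms are needed to cover $B_n$. Hence $N(T,\cP,n,\delta)<2^{n(h_{\mu}(T,\cP)+\eps)}$ for all large $n$, and letting $\eps\to 0$ yields $\limsup_n(1/n)\log N(T,\cP,n,\delta)\leqslant h_{\mu}(T,\cP)$.

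For the lower bound, fix $\eps>0$, put $\eta:=(1-\delta)/2>0$, and set
$$
A_n:=\{x\in X:\mu(\cP^{n-1}(x))<2^{-n(h_{\mu}(T,\cP)-\eps)}\}.
$$
By SMB, $\mu(A_n)\geqslant 1-\eta$ for all large $n$. Given any Borel subcollection $\cF\subset\cP^{n-1}$ with $\mu(\bigcup\cF)\geqslant 1-\delta$, the subfamily of atoms of $\cF$ lying in $A_n$ covers a set of measure at least $(1-\delta)-\eta=(1-\delta)/2$; since each such atom has mass strictly less than $2^{-n(h_{\mu}(T,\cP)-\eps)}$, one obtains $|\cF|\geqslant ((1-\delta)/2)\,2^{n(h_{\mu}(T,\cP)-\eps)}$. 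Taking the infimum over $\cF$ gives the same lower bound for $N(T,\cP,n,\delta)$, and letting $\eps\to 0$ completes the proof. The main technical point is exactly this counting step: one must calibrate the sublevel threshold (via the choice of $\eta$ in terms of $\delta$) so that any $(1-\delta)$-cover is forced to carry substantial mass inside the SMB good set $A_n$. Ergodicity is indispensable here, because without it the SMB limit would only be invariant, not constant, and could be strictly smaller than $h_{\mu}(T,\cP)$ on a set of positive measure, breaking the lower bound.
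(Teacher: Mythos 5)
The paper gives no proof of Theorem~\ref{Katok} at all --- it simply cites \cite{Katok80,Rudolph} --- and your argument via the Shannon--McMillan--Breiman theorem is precisely the standard proof found in those sources. Both counting steps are sound: the superlevel set $B_n$ is a union of fewer than $2^{n(h_\mu(T,\mathcal{P})+\eps)}$ atoms and eventually has measure at least $1-\delta$, and the calibration $\eta=(1-\delta)/2$ correctly forces any $(1-\delta)$-cover to contain at least $\frac{1-\delta}{2}\,2^{n(h_\mu(T,\mathcal{P})-\eps)}$ atoms, so the proposal is correct as written.
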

	
	Now, let us denote by $\ell_1^k$  the vector space $\R^k$ endowed with
	the $\ell_1$-norm, that is,
	$\|(r_1,\ldots,r_k)\|:= |r_1| + \cdots + |r_k|$,
	and $\ell_\infty^m$ the vector space $\R^m$ endowed with the
	$\ell_\infty$-norm, that is,
	$\|(s_1,\ldots,s_m)\|:= \max\{|s_1|,\ldots,|s_m|\}$.
	Moreover, let us denote by $B_{\ell_1^k}$ the closed unit ball of the Banach space
	$\ell_1^k$.
	
	The following quantitative tecnique connecting combinatorics to linear maps on finite-dimensional spaces was developed by Glasner and Weiss (Proposition~2.1 from \cite{EGlaBWei95}):
	
	\begin{lemma}\label{GlasnerWeisstecnique}
		Given constants $\eps > 0$ and $b > 0$, there exist constants $m_0 \in \N$
		and $c > 0$ such that the following property holds for every $m \geq m_0$:
		if $\varphi : \ell_1^k \to \ell_\infty^m$ is a linear map with
		$\|\varphi\| \leq 1$, and if $\varphi(B_{\ell_1^k})$ contains more than
		$2^{bm}$ vectors that are $\eps$-separated, then $k \geq 2^{cm}$.
	\end{lemma}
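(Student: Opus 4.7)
My plan is to prove the lemma via a Maurey-style probabilistic approximation argument that converts the hypothesis of many $\eps$-separated points in $\varphi(B_{\ell_1^k})$ into a lower bound on the number $k$ of column directions.

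The first step is structural. Since $\|\varphi\|\leq 1$, the image $\varphi(B_{\ell_1^k})$ is the absolute convex hull of the $2k$ points $\pm\varphi(e_1),\dots,\pm\varphi(e_k)$, all of which lie in $B_{\ell_\infty^m}$; equivalently, the matrix of $\varphi$ has entries in $[-1,1]$. The second step constructs a finite approximating net. Given $x\in B_{\ell_1^k}$, decompose $x=\sum_i(\lambda_i^+-\lambda_i^-)e_i$ with $\lambda_i^\pm\geq 0$ and $\sum_i(\lambda_i^++\lambda_i^-)\leq 1$, and extend these weights to a probability measure $\pi_x$ on $\{0,\pm e_1,\dots,\pm e_k\}$. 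Draw $Y_1,\dots,Y_n$ i.i.d.\ from $\pi_x$ and set $\bar Y=(1/n)\sum_{j=1}^n Y_j$; then $E[\varphi(\bar Y)]=\varphi(x)$ and each coordinate of $\varphi(Y_j)$ lies in $[-1,1]$. Applying Hoeffding's inequality coordinatewise together with a union bound over the $m$ coordinates shows that, for $n$ chosen sufficiently large (depending on $\eps$ and $m$), some realisation $\bar y$ satisfies $\|\varphi(\bar y)-\varphi(x)\|_\infty\leq \eps/2$.

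The third step is the counting. The total number of such averages $\bar y$ is at most $(2k+1)^n$, and by the triangle inequality any two $\eps$-separated points in $\varphi(B_{\ell_1^k})$ must be $\eps/2$-approximated by distinct elements of this finite net. Thus the hypothesis of $N>2^{bm}$ separated points forces $(2k+1)^n\geq N>2^{bm}$, hence $\log_2(2k+1)\geq bm/n$, and one extracts $k\geq 2^{cm}$ for some $c=c(\eps,b)>0$ and all $m\geq m_0$.

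The main obstacle I anticipate is the quantitative calibration of $n$ against $m$ and $\eps$. A naive coordinatewise Hoeffding plus union bound requires $n$ of order $\log m/\eps^2$, which, combined with the counting, would only yield $k\geq 2^{\Omega(m/\log m)}$ rather than the stated $2^{cm}$. To recover a genuinely linear exponent in $m$ with $c$ depending only on $\eps$ and $b$, one must either sharpen the concentration step by exploiting the special structure of the absolute convex hull of few vectors (so that $n$ can be taken bounded in terms of $\eps$ alone, up to factors absorbed by choosing $m_0$ large), or refine the enumeration of admissible averages so that many $n$-tuples collapse to a single approximant; pinning down this quantitative refinement is the crux of the proof.
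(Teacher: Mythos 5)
There is a genuine gap here, and you have correctly located it yourself: the quantitative calibration in your second step is not a technicality to be pinned down later but the point at which the whole approach breaks. For the Maurey empirical method in $\ell_\infty^m$, the number $n$ of sampled generators needed to guarantee $\|\varphi(\bar y)-\varphi(x)\|_\infty\le\eps/2$ really is of order $\log m/\eps^2$ and cannot be taken bounded in terms of $\eps$ alone: if the $\varphi(e_i)$ are, say, $m$ independent random sign vectors in $\{-1,1\}^m$, an average of $n$ of them deviates from its mean by about $\sqrt{\log m/n}$ in the sup norm, so a constant $n$ gives error tending to $1$ rather than to $0$ as $m\to\infty$. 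Your net therefore has size $(2k+1)^{C\log m/\eps^2}$, and the counting in your third step can only ever yield $k\ge 2^{c m/\log m}$. No refinement of the enumeration helps, because what you are really bounding is the $\eps/2$-covering number of $\varphi(B_{\ell_1^k})$ in $\ell_\infty^m$, and the resulting inequality $(2k+1)^{C\log m/\eps^2}\ge 2^{bm}$ simply does not imply $k\ge 2^{cm}$. Note also that the paper does not prove this lemma; it imports it as Proposition~2.1 of \cite{EGlaBWei95}, so the comparison is with the argument there.

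The proof in \cite{EGlaBWei95} (see also the variants used in \cite{KerrLi09}) replaces the covering-number route by a two-step combinatorial argument that avoids the $\log m$ loss. First, a quantitative Sauer--Shelah lemma (Pajor-type) converts the hypothesis of more than $2^{bm}$ $\eps$-separated points of $\varphi(B_{\ell_1^k})\subset[-1,1]^m$ into the existence of a subset $A\subset\{1,\dots,m\}$ with $|A|\ge am$, $a=a(b,\eps)>0$, which is $\eps'$-shattered: there are levels $(h_j)_{j\in A}$ such that every sign pattern $\chi\in\{-1,1\}^A$ is realised by some $x_\chi$ in the image with $\chi_j\big((x_\chi)_j-h_j\big)\ge\eps'$ for all $j\in A$. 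Second, writing $x_\chi-x_{-\chi}$ as an absolutely convex combination of the generators $w_i=\varphi(e_i)$ shows that for each $\chi$ some $i(\chi)$ satisfies $\big|\sum_{j\in A}\chi_j (w_{i(\chi)})_j\big|\ge\eps'|A|$; by Hoeffding's inequality over random signs, a fixed generator can do this for at most $2^{|A|}\cdot 2e^{-\eps'^2|A|/2}$ of the $2^{|A|}$ patterns, whence $k\ge\tfrac12 e^{\eps'^2|A|/2}\ge 2^{cm}$. The structural observation in your first step (the image is the absolute convex hull of the $2k$ points $\pm\varphi(e_i)$, all in $B_{\ell_\infty^m}$) is exactly what is needed, but it must be fed into this shattering-plus-concentration argument rather than into a net construction.
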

	
	Let us now establish our next result, which was inspired by its topological counterpart from \cite{NBerUDarRVer22}:
	
	\begin{theorem}\label{Second result}
		For every ergodic MDS $(X,\mu,T)$ and every $\wt{\mu}\in Q(\mu)$, if $(\cM(X),\wt{\mu},\wt{T})$ has $\wt{\mu}$-UPE, then $(X,\mu,T)$ has $\mu$-UPE. 
	\end{theorem}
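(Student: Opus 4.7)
I would prove the contrapositive. Suppose $(X,\mu,T)$ does not have $\mu$-UPE; then there is a replete partition $\cP=\{P_0,P_1\}$ of $X$ with $h_\mu(T,\cP)=0$, and we fix nonempty disjoint open sets $U_i\subseteq\interior(P_i)$. Define $\wt{P}_0:=\{\theta\in\cM(X):\theta(U_0)>1/2\}$, which is open by lower semicontinuity of $\theta\mapsto\theta(U_0)$ and contains $\delta_{x_0}$ for any $x_0\in U_0$, and $\wt{P}_1:=\cM(X)\setminus\wt{P}_0$, whose interior contains $\{\theta:\theta(U_0)<1/2\}\ni\delta_{x_1}$ for $x_1\in U_1$. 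Hence $\wt{\cP}=\{\wt{P}_0,\wt{P}_1\}$ is replete on $\cM(X)$, so the hypothesis of $\wt{\mu}$-UPE forces $h_{\wt{\mu}}(\wt{T},\wt{\cP})>0$. The plan is to contradict this by proving $h_{\wt{\mu}}(\wt{T},\wt{\cP})=0$.

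By Theorem~\ref{Katok} and the ergodicity of $\mu$, $h_\mu(T,\cP)=0$ yields $N(T,\cP,n,\alpha)=2^{o(n)}$ for every $\alpha\in(0,1)$. Since $\wt{\cP}^{n-1}$ has at most $2^n$ atoms, a standard splitting of the Shannon entropy into "large" and "small" atoms gives
\[
H_{\wt{\mu}}(\wt{\cP}^{n-1})\leq \log N(\wt{T},\wt{\cP},n,\delta')+\delta'\,n+O(1),
\]
so the task reduces to the subexponential estimate $N(\wt{T},\wt{\cP},n,\delta')=2^{o(n)}$ for some $\delta'>0$ (yielding $h_{\wt{\mu}}(\wt{T},\wt{\cP})\leq\delta'$ for every such $\delta'$, hence $0$). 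I would obtain it via Lemma~\ref{GlasnerWeisstecnique}. Let $k=N(T,\cP,n,\alpha)$ and fix atoms $F_1,\dots,F_k$ of $\cP^{n-1}$ with $\mu(\bigcup_iF_i)\geq 1-\alpha$, written as $F_i=\bigcap_{j=0}^{n-1}T^{-j}P_{\epsilon^{(i)}_j}$ for $\epsilon^{(i)}\in\{0,1\}^n$. Define the linear map $\varphi:\ell_1^k\to\ell_\infty^n$ by $\varphi(\lambda)_j:=\sum_{i:\epsilon^{(i)}_j=0}\lambda_i$, so that $\|\varphi\|\leq 1$. The barycenter equation together with Markov's inequality shows that the set of "good" measures $\theta$ (those with $\theta(\bigcup_iF_i)\geq 1-\sqrt{\alpha}$) has $\wt{\mu}$-measure at least $1-\sqrt{\alpha}$, and for each such $\theta$ the vector $\lambda^\theta:=(\theta(F_i))_i\in B_{\ell_1^k}$ satisfies
\[
\varphi(\lambda^\theta)_j \;=\; \theta(T^{-j}U_0)\;+\;O\!\bigl(\sqrt{\alpha}+\theta(T^{-j}(P_0\setminus U_0))\bigr),
\]
so that the $\wt{\cP}^{n-1}$-type of $\theta$ is essentially read from the orthant of $\varphi(\lambda^\theta)$ relative to the $1/2$-hyperplanes.

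Picking one good representative per $\wt{\cP}^{n-1}$-atom and restricting to the \emph{clear} atoms --- those hosting a good $\theta$ with $|\theta(T^{-j}U_0)-1/2|\geq\eps$ for every $j$ --- produces $\eps$-separated vectors in $\varphi(B_{\ell_1^k})$, and Lemma~\ref{GlasnerWeisstecnique} then caps their number by $2^{bn}$ for any prescribed $b>0$ as soon as $n$ is large, since $k=2^{o(n)}$. The main obstacle is the residual contribution of \emph{marginal} atoms, where some good $\theta$ has $\theta(T^{-j}U_0)$ within $\eps$ of $1/2$ on a positive-density set of coordinates $j$: when $\mu(U_0)$ is close to $1/2$, the $T$-invariance of $\mu$ together with the barycenter equation makes this situation typical in $\wt{\mu}$, so a pure gap argument is doomed. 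I would handle this through a second-moment estimate for
\[
\int_{\cM(X)}\frac{1}{n}\sum_{j=0}^{n-1}\mathbf{1}_{\{|\theta(T^{-j}U_0)-1/2|<\eps\}}\,d\wt{\mu}(\theta)
\]
based on the barycenter decomposition and the $T$-invariance of both $\mu$ and $\wt{\mu}$, forcing the $\wt{\mu}$-mass of measures whose trajectory $(\theta(T^{-j}U_0))_{j=0}^{n-1}$ lingers near $1/2$ on too many coordinates to drop below $\delta'$, and thereby completing the subexponential bound $N(\wt{T},\wt{\cP},n,\delta')=2^{o(n)}$ and the proof.
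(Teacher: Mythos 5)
Your proposal runs the argument in the opposite logical direction from the paper, and this is where it breaks. The paper argues directly: it feeds the pair $\wt{A}_0=\{\theta:\theta(A_0)>0.9\}$, $\wt{A}_1=\{\theta:\theta(A_1)>0.9\}$ into the Kerr--Li independence characterization of $\wt{\mu}$-UPE to produce, for each of the $2^{dm}$ maps $\sigma$ on an independence set, a measure $\nu_\sigma$ concentrating mass $>0.9$ on $T^{-j}A_{\sigma(j)}$; these give $2^{dm}$ vectors in $\varphi(B_{\ell_1^{k_m}})$ that are $0.5$-separated, so Lemma~\ref{GlasnerWeisstecnique} is used in the direction ``many separated vectors $\Rightarrow k_m\geq 2^{cm}$,'' and Theorem~\ref{Katok} finishes. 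You instead try to push zero entropy \emph{up} to the quasifactor, i.e.\ to prove a local version of the Glasner--Weiss zero-entropy quasifactor theorem for your specific partition $\wt{\cP}$. That intermediate claim is false as stated. The central approximation $\varphi(\lambda^\theta)_j=\theta(T^{-j}U_0)+O\bigl(\sqrt{\alpha}+\theta(T^{-j}(P_0\setminus U_0))\bigr)$ is useless because $\theta(T^{-j}(P_0\setminus U_0))$ need not be small: $P_0\setminus U_0$ can carry most of the measure, so $\varphi(\lambda^\theta)$ sees $\theta(T^{-j}P_0)$, not $\theta(T^{-j}U_0)$, and the $\wt{\cP}^{n-1}$-type of $\theta$ is not read from it.

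Concretely, let $(Y,\nu,S)$ be an irrational rotation, $(Z,\lambda,R)$ a Bernoulli shift, $X=Y\times Z$, $T=S\times R$, $\mu=\nu\times\lambda$, and let $\wt{\mu}$ be the law of $z\mapsto \nu\times\delta_z$ (a quasifactor, since its barycenter is $\mu$). Take $\cP=\{Q_0\times Z,\,Q_1\times Z\}$ lifted from an arc partition of $Y$, so $h_\mu(T,\cP)=0$, and take $U_0=V_0\times W_0$ with $\nu(V_0)>1/2$ and $W_0$ a cylinder. Then $(\nu\times\delta_z)(T^{-j}U_0)=\nu(V_0)\mathbf{1}_{W_0}(R^jz)$, so your $\wt{\cP}$ reproduces the Bernoulli partition $\{W_0,W_0^c\}$ and $h_{\wt{\mu}}(\wt{T},\wt{\cP})>0$. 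Thus the contradiction you aim for cannot be reached for an arbitrary admissible choice of $U_0$, and your proposal gives no recipe for a good choice. (This example does not contradict the theorem --- that $\wt{\mu}$ is not $\wt{\mu}$-UPE for other reasons --- but it refutes the lemma your plan rests on.) On top of this, the ``marginal atoms'' issue, which you yourself identify as the main obstacle, is only gestured at via an unproved second-moment estimate; note also that $\wt{\mu}$ need not be ergodic, so any appeal to Theorem~\ref{Katok} upstairs would be illegitimate, though your entropy-splitting inequality avoids that. The fix is to reverse the direction and argue as the paper does.
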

	
	\begin{proof}
		
		Suppose that $(X,\mu,T)$ is ergodic and that $\wt{\mu}\in Q(\mu)$ is so that $(\cM(X),\wt{\mu},\wt{T})$ has $\wt{\mu}$-UPE. Let $\cP := \{P_0,P_1\}$ be
		a replete partition of $X$. We have to prove that $h_{\mu}(T,\cP) > 0$.
		For this we take $A_0$ and $A_1$ any pair of nonempty open sets in $X$ with
		$$
		A_0 \subset P_0 \backslash \ov{P_1}, \ \ A_1 \subset P_1 \backslash \ov{P_0}
		\ \ \text{ and } \ \ \ov{A_0} \cap \ov{A_1} = \varnothing.
		$$
		Define
		$$
		\wt{A}_0:= \{\mu \in \cM(X) : \mu(A_0) > 0.9\} \ \text{ and } \
		\wt{A}_1:= \{\mu \in \cM(X) : \mu(A_1) > 0.9\},
		$$
		which are nonempty disjoint open sets in $\cM(X)$. By Theorem~\ref{KerrLi}, there are $d>0$, $\delta>0$ and $m_{l}\to\infty$ such that, for every $l\geqslant 1$ and every $\wt{D}\subset \cM(X)$ with $\wt{\mu}(\wt{D})\geqslant 1-\delta$, there exists $J\subset\mathbb{N}$ with $|J\cap\{1,\dots,m_{l}\}|\geqslant d.m_{l}$ such that
		\begin{equation}\label{indcondsecondresult}
			\wt{D}\cap\bigcap_{j \in I} \wt{T}^{-j}(\wt{A}_{\sigma(j)}) \neq \varnothing
		\end{equation}
		for every nonempty finite subset 
		$I \subset J$ and for every map $\sigma : I \to \{0,1\}$. Let $D\subset X$ be any Borel set with $\mu(D)\geqslant 1-\delta^2$ and fix some small $\varepsilon=\varepsilon(\delta)>0$ to be specified later. Let $n_0 \in \N$ and $c > 0$ be constants
		associated to $\eps$ and $d>0$ according to Lemma~\ref{GlasnerWeisstecnique}. Fix $l\geqslant 1$ large enough so that $m:=m_{l} \geq n_0$  and take $\{C_1,\dots,C_{k_{m}}\}$ a subcollection of  $\cP^{m-1}$ with minimal cardinality that covers $D$. Let us consider
		$$
		B_1:= C_1 \ \text{ and } \
		B_i:= C_i \backslash (C_1 \cup \ldots \cup C_{i-1}) \
		\text{ for } 2 \leq i \leq k_m.
		$$
		As $\{C_1,C_2,\ldots,C_{k_m}\}$ is minimal, we have
		$B_i \cap D\neq \varnothing$ for every $i$. Let
		$$
		M:= \big[t_{i,j}\big]_{1 \leq i \leq k_m, 0 \leq j \leq m-1}
		$$
		be a $k_m \times m$ matrix of 0's and 1's such that
		$$
		B_i \subset P_{t_{i,0}} \cap T^{-1}(P_{t_{i,1}}) \cap T^{-2}(P_{t_{i,2}})
		\cap \ldots \cap T^{-(m-1)}(P_{t_{i,m-1}}),
		$$
		for all $1 \leq i \leq k_m$. Consider the linear map
		$\varphi : \ell_1^{k_m} \to \ell_\infty^m$ given by
		$$
		\varphi(r_1,\ldots,r_{k_m}):= [r_1 \ \cdots \ r_{k_m}]\, M.
		$$
		Clearly, $\|\varphi\| \leq 1$. Since $\displaystyle\int_{\mathcal{M}(X)}\nu(D) d\wt{\mu}(\nu)=\mu(D)\geqslant 1-\delta^2$, by the Markov inequality there exists some $\widetilde{D}\subset \mathcal{M}(X)$ with $\widetilde{\mu}(\widetilde{D})\geqslant 1-\delta$ such that $\nu(D)\geqslant 1-\delta$ for each $\nu\in\widetilde{D}$. Now, by (\ref{indcondsecondresult}) there exists some $J\subset\mathbb{N}$ with $|J\cap\{1,\dots,m\}|\geqslant d.m$ such that 
		
		\begin{equation*}\widetilde{D}\cap\bigcap_{j \in I} \wt{T}^{-j}(\wt{A}_{\sigma(j)}) \neq \varnothing
		\end{equation*}
		for every nonempty finite subset 
		$I \subset J$ and for every map $\sigma : I \to \{0,1\}$. Take $I:=J\cap\{1,\dots,m\}$. For any $\sigma:I\to \{0,1\}$, there exists $\nu_{\sigma}\in\widetilde{D}$ such that $\wt{T}^{j}\nu_{\sigma}\in\wt{A}_{\sigma(j)}$ for each $j\in I$. Let $\sigma, \sigma' : I \to \{0,1\}$ be distinct functions and let
		$s \in J$ be such that $\sigma(s) = 1$ and $\sigma'(s) = 0$ (say). Then,
		$$
		\nu_\sigma(T^{-s}(A_1)) > 0.9 \ \ \text{ and } \ \
		\nu_{\sigma'}(T^{-s}(A_0)) > 0.9.
		$$
		Since $T^{-s}(A_1)\cap T^{-s}(P_0)=\varnothing$, we have
		$$
		\bigcup \{B_i : t_{i,s} = 1\} \subset T^{-s}(P_1)\\\text{ and } \ \
		T^{-s}(A_1) \subset \bigcup \{B_i : t_{i,s} = 1\}\cup D^{c}
		$$
		Thus, we obtain
		$$
		\sum_{i=1}^{k_m} t_{i,s}\, \nu_{\sigma'}(B_i)<0.1\ \ \text{ and } \ \
		0.9-\delta<\sum_{i=1}^{k_m} t_{i,s}\, \nu_\sigma(B_i).
		$$                         
		Hence, the $s^\text{th}$ coordinates of the vectors
		$$
		\varphi\big(\nu_\sigma(B_1),\ldots,\nu_\sigma(B_{k_m})\big)
		\ \ \text{ and } \ \
		\varphi\big(\nu_{\sigma'}(B_1),\ldots,\nu_{\sigma'}(B_{k_m})\big)
		$$
		are greater than $0.9-\delta$ and smaller than $0.1$, respectively, showing that, if $\varepsilon=\varepsilon(\delta)>0$ is chosen small enough, then
		these vectors are $\eps$-separated. Since $|I| \geqslant d.m$, there
		are at least $2^{dm}$ functions $\sigma$. This shows that
		$\varphi(B_{\ell_1^{k_m}})$ contains at least $2^{dm}$ vectors that
		are $\eps$-separated. Thus, by Lemma~\ref{GlasnerWeisstecnique}, $k_m \geq 2^{cm}$.
		By Theorem~\ref{Katok}, this implies that $h_{\mu}(T,\cP) \geq c > 0$, as desired.
	\end{proof}
	
	\begin{remark}
		It was proved in \cite{EGlaBWei03} that any ergodic system of positive entropy admits \emph{every} ergodic system of positive entropy as a measure-theoretic quasifactor. As a consequence, for each ergodic $\mu$-UPE system $(X,\mu,T)$ there exists some ergodic quasifactor of it \emph{without} the measure-theoretic UPE property. This shows that, unlike its topological counterpart from \cite{NBerUDarRVer22}, the converse of Theorem~\ref{Second result} does \emph{not} hold.
	\end{remark}



\section*{Acknowledgement}

The author would like to thank Nilson Bernardes Jr. and the anonymous referees for valuable comments that improved the text.


\end{document}